\numberwithin{equation}{section}
\theoremstyle{plain}
\newtheorem{thm}{Theorem}[section]
\newtheorem{lemma}{Lemma}[section]
\theoremstyle{remark}
\newtheorem{remark}{Remark}[section]
\def\sqa{\sqrt{a}}
\def\sql{\sqrt{\nu}}
\def\EE{\mathrm{E}}
\newcommand{\rd}{\mathrm{d}}
\newcommand{\JS}{\mathrm{JS}}
\newcommand{\mymid}{\,|\,}
\DeclareMathOperator*{\argmin}{arg\,min}
\begin{document}

\begin{frontmatter}
\title{Non-minimaxity of debiased shrinkage estimators}
\runtitle{Non-minimaxity of debiased estimators}

\begin{aug}
\author{\fnms{Yuzo} \snm{Maruyama}
\ead[label=e1]{maruyama@port.kobe-u.ac.jp}%
 \and
\fnms{Akimichi} \snm{Takemura}
\ead[label=e2]{a-takemura@biwako.shiga-u.ac.jp}
}

\runauthor{Y. Maruyama \& A. Takemura}

\address{Kobe University \& Shiga University\\
\printead{e1,e2}}

\end{aug}

\begin{abstract}
We consider the estimation of the $p$-variate normal mean of $X\sim N_p(\theta,I)$ 
under the quadratic loss function.
We investigate the decision theoretic properties of debiased shrinkage estimator,
the estimator which shrinks towards the origin for smaller $\|x\|^2$ and 
which is exactly equal to the unbiased estimator $X$ for larger $\|x\|^2$.
Such debiased shrinkage estimator seems superior to
the unbiased estimator $X$, which implies minimaxity.
However we show that it is not minimax under mild conditions.
\end{abstract}

\begin{keyword}[class=MSC]
\kwd[Primary ]{62C20}  
\kwd[; secondary ]{62J07}
\end{keyword}

\begin{keyword}
\kwd{minimaxity}
\kwd{debiased shrinkage estimator}
\kwd{James-Stein estimator}
\end{keyword}

\end{frontmatter}
\section{Introduction}
\label{sec:intro}
Let $ X$ have a $p$-variate normal distribution 
$ \mathcal{N}_{p} (\theta, I_{p}) $. 
We consider the problem of estimating the mean vector $\theta$ under 
the loss function 
\begin{align}
 L(\theta,\hat{\theta})=\| \hat{\theta} - \theta \|^2
=\sum_{i=1}^{p}(\hat{\theta}_i - \theta_{i})^2.
\end{align}
The risk function of an estimator $ \hat{\theta}(X)$ is
\begin{equation*}
 R(\theta,\hat{\theta})=\EE\bigl[ \| \hat{\theta}(X) - \theta \|^2\bigr]
=\int_{R^{p}}\frac{\| \hat{\theta}(x) - \theta \|^{2}}{(2\pi)^{p/2}}
\exp\left(-\frac{\| x - \theta \|^{2}}{2}\right)\rd x.  
\end{equation*}
The usual unbiased estimator $ X $ has the constant risk $p$ and is
minimax for $p\in\mathbb{N}$. 
\cite{Stein-1956} showed that there are orthogonally equivariant
estimators of the form 
\begin{align}\label{hat.theta.phi}
 \hat{\theta}_{\phi}(X)= \left( 1- \frac{\phi(\| X \|^2)}{\| X \|^2}\right)X
\end{align}
which dominate $X$ when $ p \geq 3$.
\cite{James-Stein-1961}
gave an explicit dominating procedure 
\begin{equation}\label{JS}
\hat{\theta}_{\JS}(X)=\left( 1- \frac{p-2}{\| X \|^2}\right)X,
\end{equation}  
called the James-Stein estimator. 
Further, as shown in \cite{Baranchik-1964}, the James-Stein estimator is inadmissible since
the positive-part estimator
\begin{equation}\label{JSPP}
\hat{\theta}_{\JS}^+(X)=\max\left(0, 1- \frac{p-2}{\| X \|^2}\right)X
\end{equation}  
dominates $\hat{\theta}_{\JS}$.
For a class of general shrinkage estimators $\hat{\theta}_{\phi}(X)$ given by \eqref{hat.theta.phi},
\cite{Baranchik-1970} proposed a sufficient condition for minimaxity, \{\ref{ba.1} and \ref{ba.2}\}
where
\begin{enumerate}[label= \textbf{B.\arabic*}, leftmargin=*]
\item\label{ba.1} $0\leq \phi(w)\leq 2(p-2)$ for all $w\geq 0$,
\item\label{ba.2} $\phi'(w)\geq 0$ for all $w\geq 0$.
\end{enumerate}
Further
\cite{Stein-1974} expressed the risk of
$\hat{\theta}_{\phi}(X)$ as
\begin{align}\label{stein.identity.1}
 \EE\bigl[\|\hat{\theta}_\phi-\theta\|^2\bigr]=
p+\EE\left[r_\phi(\|X\|^2)\right],
\end{align}
where 
\begin{align}\label{stein.identity.2}
 r_\phi(w)=\frac{\phi(w)}{w}\left\{\phi(w)-2(p-2)\right\}-4\phi'(w).
\end{align}
Hence the shrinkage factor $\phi(w)$ with the inequality $r_\phi(w)\leq 0$ for all $w\geq 0$,
implies minimaxity of $\hat{\theta}_\phi$. 
We see that \{\ref{ba.1} and \ref{ba.2}\} is a tractable sufficient condition 
for $r_\phi(w)\leq 0$ for all $w\geq 0$.

A series of papers, \cite{Efron-Morris-1971, Efron-Morris-1972-jasa, Efron-Morris-1972-biometrika, Efron-Morris-1973-jasa}, showed that
the James-Stein estimator can be interpreted as an empirical Bayes estimator under
$\theta\sim \mathcal{N}_p(0,\tau I_p)$.
Hence the shrinkage estimator including the James-Stein estimator utilize 
the prior information that $\|\theta\|^2$ is relatively small.
In fact, the risk function of the James-Stein estimator
is 
\begin{align}
 p-(p-2)^2\EE[1/\|X\|^2]
\end{align}
which is increasing in $\|\theta\|^2$.
On the other hand, 
the larger $\|x\|^2$ suggests that the prior information ($\|\theta\|^2$ is relatively small) 
is incorrect. 
Although the James-Stein estimator uniformly dominates $X$ under the quadratic risk,
for larger $\|x\|^2$, 
the unbiased estimator $X$ seems superior to 
the shrinkage estimators with the bias given by
\begin{align}
 \EE\left[\left( 1- \frac{\phi(\| X \|^2)}{\| X \|^2}\right)X\right]-\theta
=-\EE\left[\frac{\phi(\| X \|^2)}{\| X \|^2}X\right],
\end{align}
with $O(1/\|\theta\|)$ provided $\phi(w)$ is bounded.
Note that many popular shrinkage estimators have $\phi$ with 
\begin{align}
 \liminf_{w\to\infty} \phi(w)\geq p-2.
\end{align}
See a sufficient condition for admissibility by \cite{Brown-1971}.

In this paper, we define debiased shrinkage estimator by 
\begin{enumerate}[label= \textbf{DS.\arabic*},  leftmargin=*]
%\item \label{as.ds.1}$\phi(w)\geq 0$ for all $w\geq 0$,
\item \label{as.ds.2} $\phi(w)$ is weakly differentiable with bounded $\phi'(w)$,
\item \label{as.ds.3} For some $a>0$, $0<\phi(w)\leq w$ on $(0,a)$ and $ \phi(w)=0$ on $[a,\infty)$.
\end{enumerate}
Hence the debiased shrinkage estimator shrinks 
towards the origin for smaller $\|x\|^2$ and 
is exactly equal to the unbiased estimator $X$ for larger $\|x\|^2$.
Such debiased shrinkage estimators seem superior to
the unbiased estimator $X$, which implies minimaxity.
In this paper, we are interested in whether 
the debiased shrinkage estimators are minimax or not.

In the literature, there are some debiased estimators including
SCAD (Smoothly Clipped Absolute Deviation) by \cite{Fan-Li-2001} and
nearly unbiased estimators by MCP (Minimax Concave Penalty) by \cite{Zhang-2010},
which have not necessarily aimed at enjoying the conventional minimaxity.

The organization of this paper is as follows.
By \eqref{stein.identity.1}, 
the risk difference between $ \hat{\theta}$ and the minimax estimator $X$
is given by
\begin{align}\label{Del.nu}
%\Delta(\|\theta\|^2;a)&=
 \EE\bigl[\|\hat{\theta}_\phi-\theta\|^2\bigr]-p=\EE[r_\phi(\|X\|^2)]
=
\EE\left[r_\phi(\|X\|^2)I_{[0,a]}(\|X\|^2)\right],
\end{align}
where $r_\phi(w)$ is given by \eqref{stein.identity.2} and
the second equality follows from \ref{as.ds.3}.
In Section \ref{sec:as}, we give a useful result, Theorem \ref{theorem:non_central_chi_1}, 
on the asymptotic behavior of this type of
an expected value when $\|\theta\|^2\to\infty$.
In Section \ref{sec:scad.mcp},
we review SCAD and MCP as a solution of penalized least squares and
investigate how the corresponding $\phi(w)$ approaches $0$ as $w\nearrow a$.
In Section \ref{sec:minimax}, using Theorem \ref{theorem:non_central_chi_1}, 
we show that the debiased shrinkage estimators with \ref{as.ds.2} and \ref{as.ds.3} 
as well as mild conditions on the way how $\phi(w)$ approaches $0$ as $w\nearrow a$, 
are not minimax, which is not necessarily expected. 

\section{Asymptotic behavior of an expected value}
\label{sec:as}
For fixed $a>0$, we investigate the asymptotic behavior of the expected value 
\begin{align}
G(\|\theta\|^2;a)=\EE\left[g(\|X\|^2)I_{[0,a]}(\|X\|^2)\right] \text{ as }\|\theta\|^2\to\infty
\end{align}
where $g(w)$ satisfies \ref{as.g.1} and \ref{as.g.3}:
\begin{enumerate}[label= \textbf{A.\arabic*},  leftmargin=*]
\item \label{as.g.1} $w^{(p-1)/2}|g(w)|$ is bounded on $[0,a]$.
\item \label{as.g.3} There exists a nonnegative real $b$ such that
\begin{align}\label{eq:left_cont}
 \lim_{w\nearrow a}\frac{g(w)}{(a-w)^{b}}=1.
\end{align}
\end{enumerate}
Notice that, on \ref{as.g.3}, we do not lose the generality even if 
we assume the limit of $g(w)/(a-w)^b$ is $1$. If the limit is equal to $g_*(\neq 0)$,
we have only to consider 
\begin{align*}
g_* \EE\left[\frac{g(\|X\|^2)}{g_*}I_{[0,a]}(\|X\|^2)\right].
\end{align*}
Then we have the following result.
\begin{thm}\label{theorem:non_central_chi_1}
Assume $p\geq 2$ and that $g(w)$ satisfies \ref{as.g.1} and \ref{as.g.3}.
Let $\nu=\|\theta\|^2$ and
\begin{equation}\label{eq:c}
c(a,b,p)= \frac{a^{(p-1)/4+b/2}2^b\Gamma(b+1)}{\sqrt{2\pi}\exp(a/2)}.
\end{equation}
Then 
\begin{equation*}
 \lim_{\nu\to\infty}\frac{\nu^{(p+1)/4+b/2}e^{\nu/2}}{e^{\sql\sqa}} G(\nu;a)
=c(a,b,p).
\end{equation*}
\end{thm}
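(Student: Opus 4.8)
The plan is to write $G(\nu;a)$ as an explicit integral against the noncentral chi-square density with $p$ degrees of freedom and noncentrality $\nu$, and then analyze that integral by Laplace's method as $\nu\to\infty$. Writing $\|X\|^2\sim\chi_p^2(\nu)$, we have
\begin{equation*}
G(\nu;a)=\int_0^a g(w)\,f_p(w;\nu)\,\rd w,
\qquad
f_p(w;\nu)=\frac{1}{2}e^{-(w+\nu)/2}\Bigl(\frac{w}{\nu}\Bigr)^{(p-2)/4} I_{(p-2)/2}(\sqrt{\nu w}),
\end{equation*}
where $I_{(p-2)/2}$ is the modified Bessel function. The dominant exponential behavior on $[0,a]$ comes from the factor $e^{\sqrt{\nu w}}$ hidden inside $I_{(p-2)/2}(\sqrt{\nu w})$: using the large-argument asymptotic $I_\mu(z)\sim e^z/\sqrt{2\pi z}$, the integrand behaves like $\mathrm{const}\cdot w^{(p-3)/4}\nu^{-(p-1)/4}e^{-\nu/2}e^{\sqrt{\nu w}}g(w)$, and since $w\mapsto\sqrt{\nu w}$ is increasing, the mass concentrates near the right endpoint $w=a$. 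This already dictates the normalizing sequence $\nu^{(p+1)/4+b/2}e^{\nu/2}/e^{\sqrt{\nu}\sqrt a}$ in the statement.

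The key steps, in order, are: (i) substitute $w=a-t$ and expand $\sqrt{\nu w}=\sqrt{\nu a}\sqrt{1-t/a}=\sqrt{\nu a}-\tfrac{1}{2}\sqrt{\nu/a}\,t+O(\sqrt\nu t^2)$, so that the exponential factor becomes $e^{\sqrt{\nu a}}e^{-\frac12\sqrt{\nu/a}\,t}(1+o(1))$ on the relevant scale $t=O(1/\sqrt\nu)$; (ii) use \ref{as.g.3} to replace $g(a-t)$ by $t^b$ near $t=0$, and substitute $s=\tfrac12\sqrt{\nu/a}\,t$ to turn the integral into $\bigl(2\sqrt a/\sqrt\nu\bigr)^{b+1}\int_0^{\cdot} s^b e^{-s}\,\rd s$ up to lower-order corrections; (iii) recognize $\int_0^\infty s^b e^{-s}\,\rd s=\Gamma(b+1)$, collect all the $\nu$-powers and the constants $a^{(p-3)/4}$, $a^{(b+1)/2}$, $2^b$, $1/\sqrt{2\pi}$, $e^{-a/2}$, and check that the product matches $c(a,b,p)$ in \eqref{eq:c} and that the residual $\nu$-power is exactly $\nu^{-(p+1)/4-b/2}$; (iv) bound the contribution of the region $w\le a-\delta$ (for fixed small $\delta>0$) by $e^{\sqrt{\nu(a-\delta)}}$ times a constant — using \ref{as.g.1} to control $\int_0^{a-\delta}|g|$ — which is exponentially smaller than $e^{\sqrt{\nu a}}$ and hence negligible after multiplication by the normalizing sequence.

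The main obstacle is making step (ii) rigorous: one must show that replacing $g(a-t)$ by $t^b$ and replacing the exponent by its linearization commits only an $o(1)$ relative error after renormalization, uniformly enough to pass to the limit. Near $t=0$ the substitution of $g$ is controlled by \ref{as.g.3}, but one needs a dominated-convergence-type argument on the rescaled variable $s$, splitting the $t$-integral at, say, $t=\nu^{-1/2}\log\nu$ so that both the quadratic error $\sqrt\nu t^2$ in the exponent and the error in $g(a-t)/t^b-1$ are simultaneously small, while the tail $s\gtrsim\log\nu$ of $\int s^b e^{-s}\,\rd s$ is negligible; the singular weight $w^{(p-1)/2}$ in \ref{as.g.1} is only needed to handle a possible singularity of $g$ at $w=0$, which lies in the already-negligible region $w\le a-\delta$. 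Care is also needed when $p=2$, where $\mu=(p-2)/2=0$ and the Bessel prefactor degenerates, but the large-argument asymptotic $I_0(z)\sim e^z/\sqrt{2\pi z}$ still holds, so the same computation goes through.
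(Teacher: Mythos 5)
Your route is genuinely different from the paper's. You work directly with the Bessel-function form of the noncentral $\chi^2_p(\nu)$ density and run a Laplace/Watson's-lemma argument at the right endpoint $w=a$; the paper instead decomposes $\|X\|^2=U^2+V$ with $U\sim N(\sql,1)$ and $V\sim\chi^2_{p-1}$ independent, conditions on $U$, absorbs the behavior of $g$ near $a$ convolved with the $\chi^2_{p-1}$ density near $0$ into a single bounded function $H(y)$ with $\lim_{y\to0}H(y)=\Gamma(b+1)/\{\Gamma(b+(p+1)/2)2^{(p-1)/2}\}$ (Lemma \ref{lem:G*}), and then makes one change of variables $z=(\sql-\sqa)(\sqa-u)$, after which dominated convergence does all the work --- no Bessel asymptotics, no splitting of the integration range, and the uniformity issues you worry about in your step (ii) never arise. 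Your constants do check out: collecting $\tfrac12\cdot 2^{b+1}=2^b$, $a^{(p-3)/4+(b+1)/2}=a^{(p-1)/4+b/2}$ and $\nu^{-(p-1)/4-(b+1)/2}=\nu^{-(p+1)/4-b/2}$ reproduces $c(a,b,p)$ exactly, and the error control you sketch (splitting at $t=\nu^{-1/2}\log\nu$, uniform large-argument expansion of $I_{(p-2)/2}(\sqrt{\nu w})$ for $w$ near $a$) is routine. What each approach buys: yours makes the mechanism transparent (the tilt $e^{\sqrt{\nu w}}$ concentrates mass at $w=a$) and connects to classical Watson's lemma; the paper's avoids special functions entirely and gets the limit by dominated convergence at the cost of the auxiliary lemma on $H$.

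One step of yours is stated incorrectly and needs repair: in (iv) you propose to bound the region $w\le a-\delta$ ``using \ref{as.g.1} to control $\int_0^{a-\delta}|g|$.'' Condition \ref{as.g.1} only gives $|g(w)|\le Mw^{-(p-1)/2}$, and $w^{-(p-1)/2}$ is not integrable at $0$ for any $p\ge3$, so $\int_0^{a-\delta}|g|$ need not be finite. You must pair $|g|$ with the density: near $w=0$ the exact noncentral density behaves like $C_\nu w^{p/2-1}$ (small-argument Bessel regime --- the large-argument asymptotic you use elsewhere is not valid there), so $|g(w)|f_p(w;\nu)\lesssim C_\nu w^{-1/2}$ is integrable, and the region $[0,a-\delta]$ still carries the factor $e^{-\nu/2}e^{\sqrt{\nu(a-\delta)}}$ that makes it negligible after renormalization. (The paper handles the same difficulty by a correlation/monotone-likelihood-ratio inequality, replacing $g$ on $[0,a/2]$ by constants $\pm M_1$ and squeezing $G(\nu;a)$ between two bounded cases.) With that repair your outline yields a complete proof.
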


\begin{proof}
We first prove the theorem under the proper subset of \ref{as.g.1};
\begin{enumerate}[label= \textbf{A.1.\arabic*},  leftmargin=45pt]
\item \label{as.g.1.1} $|g(w)|$ is bounded on $[0,a]$.
\end{enumerate}
Note that $\|X\|^2$ can be decomposed as $U^2+V$ where $U\sim N(\sqrt{\nu},1)$, $V\sim \chi^2_{p-1}$
and $U$ and $V$ are mutually independent.
Then we have
\begin{align*}
G(\nu;a) &=\EE\left[g(U^2+V)I_{[0,a]}(U^2+V)\right] \\
&=\EE_U\left[\EE_V\left[g(U^2+V)I_{[0,a]}(U^2+V)\mymid U=u\right]I_{[-\sqrt{a},\sqrt{a}]}(U)\right] \\
&=\EE_U\left[\EE_V\left[g(V+a-\{a-U^2\})I_{[0,a-U^2]}(V)\mymid U=u\right]I_{[-\sqrt{a},\sqrt{a}]}(U)\right] \\
&=\EE_U\left[H(a-U^2)(a-U^2)^{b+q}I_{[-\sqrt{a},\sqrt{a}]}(U)\right] ,
\end{align*}
where $q=(p-1)/2$, $H(\cdot)$ is given by 
\begin{align}\label{eq:G}
H(y)=\frac{1}{y^{b+q}}\int_0^y g(v+a-y)f_{p-1}(v)\rd v
\end{align}
and $f_{p-1}(v)$ is the pdf of $\chi^2_{p-1}$.
Hence $G(\nu;a)$ is rewritten as
\begin{align*}
G(\nu;a)= \int_{-\sqa}^{\sqa} H(a-u^2) 
(a-u^2)^{b+q}\frac{1}{\sqrt{2\pi}} e^{-(u-\sql)^2/2}\rd u.
\end{align*}
Since the asymptotic behavior of $G(\nu;a)$ as $\nu\to\infty$ is of interest, 
$\nu>a$ is assumed in the following.
For $G(\nu;a)$, apply the change of variables,
\begin{equation*}
 z=(\sql-\sqa)(-u+\sqa)
\end{equation*}
which implies
\begin{equation*}
 u=\sqa -\frac{z}{\sql-\sqa}, \ \rd u=-\frac{1}{\sql-\sqa}\rd z.
\end{equation*}
Then we have
\begin{align*}
&G(\nu;a)\\
&=\frac{1}{\sqrt{2\pi}(\sql-\sqa)} \int_0^{2\sqa(\sql-\sqa)}
\left(a-\left\{\sqa -\frac{z}{\sql-\sqa}\right\}^2\right)^{b+q}
\notag \\
&\quad \times
H\left(a-\left\{\sqa -\frac{z}{\sql-\sqa}\right\}^2\right)
\exp\left(-\frac{1}{2} \left\{\sqa -\frac{z}{\sql-\sqa}-\sql\right\}^2\right)\rd z.\notag
\end{align*}
%where 
%\begin{equation}\label{eq:H.H1}
% H_1(z;\nu)=H\left(a-\left\{\sqa -\frac{z}{\sql-\sqa}\right\}^2\right).
%\end{equation}
Further we rewrite it as
\begin{align*}
& G(\nu;a)\\
&=\frac{\exp(-\{\sql-\sqa\}^2/2)}
{\sqrt{2\pi}(\sql-\sqa)} \int_0^{2\sqa(\sql-\sqa)}
H\left(a-\left\{\sqa -\frac{z}{\sql-\sqa}\right\}^2\right) \\
&\qquad \times\left(\frac{2\sqa z}{\sql-\sqa}-\frac{z^2}{(\sql-\sqa)^2}\right)^{b+q} 
 \exp\left(-z-\frac{z^2}{2(\sql-\sqa)^2}\right)\rd z \\
&=\frac{\exp(-\{\sql-\sqa\}^2/2)2^{b+q}a^{(b+q)/2}}{\sqrt{2\pi}(\sql-\sqa)^{b+q+1}}
\int_0^\infty z^{b+q}e^{-z}H_1(z;\nu)\rd z,
\end{align*}
where 
\begin{equation}\label{H1H1H1}
 \begin{split}
H_1(z;\nu)&=H\left(a-\left\{\sqa -\frac{z}{\sql-\sqa}\right\}^2\right) 
\left(1-\frac{z}{2\sqa (\sql-\sqa)}\right)^{b+q} \\
&\qquad \times \exp\left(-\frac{z^2}{2(\sql-\sqa)^2}\right)I_{(0,2\sqa(\sql-\sqa)) }(z).
\end{split}
\end{equation}
From Part \ref{lem:G*_2} of Lemma \ref{lem:G*} below, $H(y)$ on $[0,a]$ is bounded under \ref{as.g.1.1}.
Hence, for any $\nu$, we have
\begin{align}\label{bound.H1}
 H_1(z;\nu)
\begin{cases}
\displaystyle\leq \max_{y\in[0,a]}|H(y)| & 0\leq z\leq 2\sqa (\sql-\sqa)\\
=0 & z>2\sqa (\sql-\sqa).
\end{cases}
\end{align}
Further, by \eqref{H1H1H1} and Part \ref{lem:G*_1} of Lemma \ref{lem:G*}, we have
\begin{align}\label{limit.H1}
 \lim_{\nu\to\infty}H_{1}(z;\nu)=\lim_{y\to 0}H(y)=\frac{\Gamma(b+1)}{\Gamma(b+(p+1)/2)2^{(p-1)/2}}.
\end{align}
%\begin{align}
% c_1(b,p)=\frac{\mathrm{B}(q, b+1)}{\Gamma(q)2^{q}}.
%\end{align}
By \eqref{bound.H1} and \eqref{limit.H1}, the dominated convergence theorem, 
gives
\begin{align*}
&\lim_{\nu\to\infty} \int_0^\infty z^{b+q}e^{-z}H_1(z;\nu)\rd z
= \int_0^\infty z^{b+q}e^{-z}\lim_{\nu\to\infty}H_1(z;\nu)\rd z 
%\\
%&\quad =\Gamma(b+q+1)\frac{\Gamma(b+1)}{\Gamma(p/2-1/2+b+1)2^{p/2-1/2}} 
= \frac{\Gamma(b+1)}{2^{(p-1)/2}},
\end{align*}
which completes the proof under \ref{as.g.1.1}.

%which is equal to 
%Note, for any fixed $z$,  $ H_{2}(z;\nu) \leq H_{1}(z;\nu)$ follows.
%Recall that $H_{1}(z;\nu)$ is defined through $H(y)$ as in \eqref{eq:H.H1}.
% Hence, $H_1(z;\nu)$ (and hence $H_1(z;\nu)$) is also bounded for all 
%$H_{1}$ is bounded 
%\begin{equation*}
% \lim_{\nu\to\infty}H_{2}(z;\nu)=
% c_1(b,p), 
%\end{equation*}
%where 
\medskip

Now we assume \ref{as.g.1}, that is, $w^{(p-1)/2}|g(w)|$ is bounded on $[0,a]$ as
\begin{align}
 w^{(p-1)/2}|g(w)|<M.
\end{align}
Let $f_p(w,\nu)$ be the density of $W=\|X\|^2$.
Note that $ f_p(w,\nu)/f_p(w,0)$ for any fixed $\nu>0$ is increasing in $w$ and that
$ w^{-(p-1)/2}$ is decreasing in $w$.
By the correlation inequality, we have
\begin{align*}
\left| \int_0^{a/2} g(w) f_p(w,\nu)\rd w \right| 
&< \int_0^{a/2} |g(w)| f_p(w,\nu)\rd w  \\
&<M \int_0^{a/2} w^{-(p-1)/2} f_p(w,\nu)\rd w \\
&=M \int_0^{a/2} w^{-(p-1)/2} \frac{f_p(w,\nu)}{f_p(w,0)}f_p(w,0)\rd w \\
& \leq M \frac{\int_0^{a/2} w^{-(p-1)/2}f_p(w,0)\rd w}
{\int_0^{a/2} f_p(w,0)\rd w}\int_0^{a/2} f_p(w,\nu)\rd w \\
& \leq M_1 \int_0^{a/2} f_p(w,\nu)\rd w
\end{align*}    
where 
\begin{equation*}
 M_1=M \frac{\int_0^{a/2} w^{-(p-1)/2}f_p(w,0)\rd w}
{\int_0^{a/2} f_p(w,0)\rd w}.
\end{equation*}
Let 
\begin{align*}
 g_L(w)=
\begin{cases}
 -M_1 & 0\leq w\leq a/2 \\
g(w) & a/2<w\leq a,
\end{cases} \quad
 g_U(w)=
\begin{cases}
 M_1 & 0\leq w\leq a/2 \\
g(w) & a/2<w\leq a,
\end{cases}
\end{align*}
which are both bounded. 
Then we have
\begin{align*}
\EE\left[g_L(W)I_{[0,a]}(W)\right]<G(\nu;a)<\EE\left[g_U(W)I_{[0,a]}(W)\right]
\end{align*}
and, by the result under \ref{as.g.1.1},
 \begin{align*}
& \lim_{\nu\to\infty}\frac{\nu^{(p+1)/4+b/2}e^{\nu/2}}{e^{\sql\sqa}} 
\EE\left[g_L(W)I_{[0,a]}(W)\right]\\
&=
 \lim_{\nu\to\infty}\frac{\nu^{(p+1)/4+b/2}e^{\nu/2}}{e^{\sql\sqa}} 
\EE\left[g_U(W)I_{[0,a]}(W)\right]
=c(a,b,p),
\end{align*}
where $c(a,b,p)$ is given by \eqref{eq:c}.
Hence Theorem \ref{theorem:non_central_chi_1} is valid for the case where
$w^{(p-1)/2}|g(w)|$ is bounded.
\end{proof}

The following lemma gives some properties on $H(y)$ given by \eqref{eq:G},
needed in the proof of Theorem \ref{theorem:non_central_chi_1}.
\begin{lemma}\label{lem:G*}
We assume that $ |g(w)|$ is bounded on $[0,a]$ as in \ref{as.g.1.1}.
Then we have the following results.
\begin{enumerate}
 \item \label{lem:G*_1}
$\displaystyle \lim_{y\to 0}H(y)=\frac{\Gamma(b+1)}{\Gamma(b+(p+1)/2)2^{(p-1)/2}}$.
%Let $ c_1(b,p)= B(q,b+1)/\{\Gamma(q)2^{q}\}$ with $q=(p-1)/2$. Then 
%\begin{align*}
% \lim_{y\to 0}H(y)=c_1(b,p).
%\end{align*}
\item \label{lem:G*_2}
$ H(y)$ on $[0,a]$ is bounded.
\end{enumerate}
 \end{lemma}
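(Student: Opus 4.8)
The plan is to collapse $H(y)$ onto a fixed interval by rescaling and then invoke dominated convergence. Concretely, writing $q=(p-1)/2$ and using the explicit density $f_{p-1}(v)=v^{q-1}e^{-v/2}/(2^q\Gamma(q))$, I would substitute $v=yt$ in \eqref{eq:G} to obtain
\[
 H(y)=\frac{1}{2^q\Gamma(q)}\int_0^1 \frac{g(a-y(1-t))}{y^b}\,t^{q-1}e^{-yt/2}\,\rd t .
\]
The purpose of this rewriting is that the awkward factor $y^{-(b+q)}$ has been absorbed: the $y^q$ coming from the Jacobian and the $\chi^2_{p-1}$ measure cancels it down to a single $y^{-b}$ sitting against $g(a-y(1-t))$, a quantity that hypothesis \ref{as.g.3} is tailor-made to control.

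For Part~\ref{lem:G*_1}, I would pass to the limit $y\to0$ inside the integral. For fixed $t\in[0,1)$ we have $a-y(1-t)\nearrow a$, so by \ref{as.g.3} the ratio
$g(a-y(1-t))/y^b=\bigl(g(a-y(1-t))/(y(1-t))^b\bigr)(1-t)^b$ tends to $(1-t)^b$, while $e^{-yt/2}\to1$; since $p\ge2$ gives $q>0$, the limiting integrand $(1-t)^bt^{q-1}$ is integrable on $[0,1]$. To justify the interchange I would fix $\delta\in(0,a]$ with $|g(w)|\le 2(a-w)^b$ on $[a-\delta,a]$ (available from \ref{as.g.3}); then for $0<y\le\delta$ the integrand is dominated by $2(1-t)^bt^{q-1}$, and the dominated convergence theorem together with the Beta integral $\int_0^1(1-t)^bt^{q-1}\rd t=\Gamma(q)\Gamma(b+1)/\Gamma(q+b+1)$ yields exactly $\Gamma(b+1)/\{2^{(p-1)/2}\Gamma(b+(p+1)/2)\}$, using $q+b+1=b+(p+1)/2$.

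For Part~\ref{lem:G*_2}, I would split $[0,a]$ at the same $\delta$. On $(0,\delta]$ the majorant from the previous paragraph already gives $|H(y)|\le \frac{2}{2^q\Gamma(q)}\int_0^1(1-t)^bt^{q-1}\rd t$, a finite constant free of $y$. On $[\delta,a]$ the crude bound $|g|\le M':=\sup_{[0,a]}|g|<\infty$ from \ref{as.g.1.1}, together with $\int_0^1 t^{q-1}\rd t=1/q$ and $e^{-yt/2}\le1$, gives $|H(y)|\le M'/(2^q\Gamma(q)\,q\,\delta^b)$. Combining the two estimates shows $H$ is bounded on $[0,a]$.

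I expect the only genuine obstacle to be the construction of the integrable majorant near $y=0$: the naive bound $|g|\le M'$ fails there because it leaves the factor $M'/y^b$, which is unbounded as $y\to0$ once $b>0$; the remedy is precisely to spend hypothesis \ref{as.g.3}, trading the singular $y^{-b}$ against $(a-w)^b$. Once that is arranged, the change of variables and the Beta-function evaluation are routine.
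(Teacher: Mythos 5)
Your proof is correct and follows essentially the same route as the paper's: both use \ref{as.g.3} to trade $g$ near $a$ for $(a-w)^b$, reduce the limit to the Beta integral $B((p-1)/2,\,b+1)$, and then obtain boundedness by combining the behavior near $y=0$ with the crude bound $|g|\le M'$ from \ref{as.g.1.1} away from it. The only cosmetic differences are that you package the limit as a dominated-convergence argument after the rescaling $v=yt$ where the paper uses two-sided $(1\pm\epsilon)$ sandwich bounds on $g$ and on $f_{p-1}$, and that for Part \ref{lem:G*_2} you give an explicit uniform bound on $[\delta,a]$ where the paper invokes the continuity of $H$ together with the finite value $H(a)$.
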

\begin{proof}
By \eqref{eq:left_cont}, for any $\epsilon>0$, there exists $\delta_1(\epsilon)>0$ such that
\begin{equation*}
 (1-\epsilon)(a-w)^b \leq g(w)\leq (1+\epsilon)(a-w)^b
\end{equation*}
for all $a-\delta_1\leq w < a$ and hence
\begin{align}
 (1-\epsilon)(y-v)^b \leq g(v+a-y)\leq (1+\epsilon)(y-v)^b
\end{align}
for all $0<v<y\leq \delta_1(\epsilon)$.
Further, for any $0<\epsilon<1$, we have 
$1-\epsilon \leq  e^{-v/2} $ for all $0<v\leq\delta_2(\epsilon)$ where
$\delta_2(\epsilon)=-2\log(1-\epsilon)$ and hence
\begin{equation*}
 (1-\epsilon)\frac{v^{q-1}}{\Gamma(q)2^{q}} \leq f_{p-1}(v) \leq \frac{v^{q-1}}{\Gamma(q)2^{q}} 
\end{equation*}
with $q=(p-1)/2$ and for all $v\leq \delta_2(\epsilon)$. 
Then for any $\epsilon>0$ and all $0<y\leq \min(\delta_1(\epsilon),\delta_2(\epsilon))$,
we have
\begin{align*}
 (1-\epsilon)^2
\int_0^y \frac{(y-v)^b}{y^{b+q}}\frac{v^{q-1}}{\Gamma(q)2^{q}}\rd v\leq H(y) \leq
(1+\epsilon)\int_0^y \frac{(y-v)^b}{y^{b+q}}\frac{v^{q-1}}{\Gamma(q)2^{q}}\rd v,
\end{align*}
where
\begin{align}
 \int_0^y \frac{(y-v)^b}{y^{b+q}}v^{q-1}\rd v=B(q,b+1)=B((p-1)/2,b+1).
\end{align}
Hence, for $0<y\leq \min(\delta_1(\epsilon),\delta_2(\epsilon))$, we have
\begin{align*}
 \frac{(1-\epsilon)^2\Gamma(b+1)}{\Gamma(b+(p+1)/2)2^{(p-1)/2}}
\leq H(y)\leq \frac{(1+\epsilon)\Gamma(b+1)}{\Gamma(b+(p+1)/2)2^{(p-1)/2}}.
\end{align*}
and the part \ref{lem:G*_1} follows.

By \eqref{eq:G}, we have
\begin{align}
 H(a)=\frac{1}{a^{b+(p-1)/2}}\int_0^a g(v)f_{p-1}(v)\rd v,
\end{align}
which is bounded under \ref{as.g.1.1}.
By the continuity of $H(y)$ and Part \ref{lem:G*_1} of this lemma, 
the part \ref{lem:G*_2} follows.
\end{proof}

\section{Review of existing debiased shrinkage estimators}
\label{sec:scad.mcp}
As we mentioned in Section \ref{sec:intro},
in the literature, there are some ``debiased shrinkage'' estimators including
SCAD (Smoothly Clipped Absolute Deviation) by \cite{Fan-Li-2001} and
nearly unbiased estimators by MCP (Minimax Concave Penalty) by \cite{Zhang-2010}, although
they do not necessarily aim at enjoying the conventional minimaxity.
In this section, we assume $p=1$ and 
review existing estimators as solutions of the penalized least squares problem;
\begin{align}\label{hat.theta}
 \hat{\theta}(P;\lambda)=\argmin_\theta\left\{ (\theta-x)^2+P(|\theta|;\lambda)\right\}.
\end{align}
Table \ref{table:rsh} summarizes three popular penalty functions $P(|\theta|;\lambda)$,
and the corresponding minimizers ``ridge'', ``soft thresholding'' and ``hard thresholding''.
\begin{table}
 \setlength{\tabcolsep}{2pt}
\caption{Ridge, Soft-thresholding, Hard-thresholding}
\label{table:rsh}
\begin{tabular}{cccc} \toprule
 & $P(|\theta|;\lambda)$ & $\hat{\theta}(P;\lambda)$ & $\phi$ \\ \midrule
ridge &  $\displaystyle\lambda \theta^2$ 
&  $\displaystyle \hat{\theta}_{\mathrm{R}}=(1-1/\{\lambda+1\})x$ &
\ref{as.ds.2}  %\begin{tabular}{c}
%\ref{as.ds.1}  \\ \ref{as.ds.2}  
%\end{tabular}
\\
\begin{tabular}{c}
soft \\ thresholding 
\end{tabular} &  $\displaystyle\lambda |\theta|$ & 
$\displaystyle \hat{\theta}_{\mathrm{ST}}=\begin{cases}
		0 & x^2\leq \lambda^2 \\ (1-\lambda/|x|)x & x^2> \lambda^2
	       \end{cases}$ &
\ref{as.ds.2}  \\
%\begin{tabular}{c}
%\ref{as.ds.1}  \\ \ref{as.ds.2}  
%\end{tabular}\\
\begin{tabular}{c}
hard \\ thresholding 
\end{tabular}  &  
%\begin{tabular}{c}
%$\lambda^2$ \\ $-(|\theta|-\lambda)^2I_{[0,\lambda]}(|\theta|)$\\
%\end{tabular}
$\displaystyle \lambda^2-(|\theta|-\lambda)^2I_{[0,\lambda]}(|\theta|)$ 
& 
$\displaystyle \hat{\theta}_{\mathrm{HT}}=\begin{cases}
		0 & x^2\leq \lambda^2 \\ x & x^2> \lambda^2
	       \end{cases}$&
\ref{as.ds.3}  
%\begin{tabular}{c}
%\ref{as.ds.1}  \\ 
%\end{tabular}
 \\ \bottomrule
\end{tabular}
\end{table}
For the three estimators, 
the corresponding shrinkage factors, $\phi(x^2)$,
from the form
\begin{align}
 \hat{\theta}=\left(1-\frac{\phi(x^2)}{x^2}\right)x
\end{align}
are
\begin{align*}
 \phi_{\mathrm{R}}(w)=\frac{w}{\lambda+1},\quad 
 \phi_{\mathrm{ST}}(w)=\begin{cases}
			w & w \leq \lambda^2 \\ \lambda w^{1/2} & w>\lambda^2
		       \end{cases},\quad 
 \phi_{\mathrm{HT}}(w)=\begin{cases}
			w & w \leq \lambda^2 \\ 0 & w>\lambda^2
		       \end{cases}.
\end{align*}
We see that
\ref{as.ds.3}, \ref{as.ds.3} and \ref{as.ds.2} are not satisfied by 
$\phi_{\mathrm{R}}(w)$, $ \phi_{\mathrm{ST}}(w)$ and $ \phi_{\mathrm{HT}}(w)$, respectively.

SCAD (Smoothly Clipped Absolute Deviation) by \cite{Fan-Li-2001} is the minimizer,
\eqref{hat.theta}, with the continuous differentiable penalty function defined by
\begin{align}
 P'(|\theta|;\lambda,\alpha)
=\begin{cases}
\lambda & |\theta|<\lambda \\
\displaystyle\frac{\alpha\lambda - |\theta|}{\alpha-1} & \lambda\leq |\theta|<\alpha\lambda \\
0 & |\theta|\geq \alpha\lambda
 \end{cases}
\end{align}
where $\alpha>2$. 
%It corresponds to a quadratic spline function with knots at $\lambda$ and $\alpha\lambda$.
%This penalty function leaves large values of not excessively penalized and makes the solution continuous. 
The resulting solution is 
\begin{align}
\hat{\theta}_{\mathrm{SCAD}}(\lambda;\alpha)
=
\begin{cases}
 0 & 0<x^2< \lambda^2 \\
(1-\lambda/|x|)x & \lambda^2 \leq x^2 \leq 4\lambda^2 \\
\displaystyle \left(1-\frac{-x^2+\alpha\lambda |x|}{(\alpha-2)x^2}\right)x & 4\lambda^2 \leq x^2 \leq \alpha^2 \lambda^2\\
x & x^2\geq \alpha^2\lambda^2,
\end{cases} 
\end{align}
where the corresponding shrinkage factor is
\begin{align}\label{phi.scad}
\phi_{\mathrm{SCAD}}(w)
=
\begin{cases}
 w & 0<w< \lambda^2 \\
\lambda w^{1/2} &\lambda^2 \leq w \leq 4\lambda^2 \\
\displaystyle \frac{-w+\alpha\lambda w^{1/2}}{a-2} & 4\lambda^2 \leq w \leq \alpha^2 \lambda^2\\
0 & w\geq \alpha^2\lambda^2.
\end{cases} 
\end{align}
We see that $\phi_{\mathrm{SCAD}}(w)$ satisfies both \ref{as.ds.2} and \ref{as.ds.3}. 
Further, by \eqref{phi.scad}, the derivative at $w=\alpha^2\lambda^2$ is
\begin{align}\label{eq:deriv.scad}
 \frac{\rd}{\rd w}\phi_{\mathrm{SCAD}}(w)|_{w=\alpha^2\lambda^2}=-\frac{1}{2(\alpha-2)}<0.
\end{align}
As pointed in \cite{Strawderman-Wells-2012},
the nearly unbiased estimator by MCP (Minimax Concave Penalty) considered in
\cite{Zhang-2010} is equivalent to 
the minimizer of \eqref{hat.theta} with the continuous differentiable penalty function defined by
\begin{align}
 P'(|\theta|;\lambda,\alpha)
=\begin{cases}
2\{\lambda-|\theta|/\alpha\} & |\theta|<\alpha\lambda \\
0 & |\theta|\geq \alpha\lambda
 \end{cases}
\end{align}
where $\alpha>1$. 
%\begin{align}
% P(|\theta|;\lambda,\alpha)
%=
%2\lambda\int_0^{|\theta|}\max\Bigl(0,1-\frac{t}{\alpha\lambda}\Bigr)\rd t,
%\end{align}
%where $\alpha>1$. 
Then the resulting solution is given by
\begin{align}
\hat{\theta}_{\mathrm{MCP}}(\lambda;\alpha)
=
\begin{cases}
 0 & 0<x^2< \lambda^2 \\
\displaystyle \left(1-\frac{-x^2+\alpha\lambda |x|}{(\alpha-1)x^2}\right)x & \lambda^2 \leq x^2 \leq \alpha^2 \lambda^2\\
x & x^2\geq \alpha^2\lambda^2,
\end{cases} 
\end{align}
where the corresponding shrinkage factor is
\begin{align}\label{phi.mcp}
\phi_{\mathrm{MCP}}(w)
=
\begin{cases}
 w & 0<w< \lambda^2 \\
\displaystyle \frac{-w+\alpha\lambda w^{1/2}}{\alpha-1} & \lambda^2 \leq w \leq \alpha^2 \lambda^2\\
0 & w\geq \alpha^2\lambda^2.
\end{cases} 
\end{align}
We see that $\phi_{\mathrm{MCP}}(w)$ satisfies both \ref{as.ds.2} and \ref{as.ds.3}. 
Further, by \eqref{phi.mcp}, the derivative at $w=\alpha^2\lambda^2$ is
\begin{align}\label{eq:deriv.mcp}
 \frac{\rd}{\rd w}\phi_{\mathrm{MCP}}(w)|_{w=\alpha^2\lambda^2}=-\frac{1}{2(\alpha-1)}<0.
\end{align}
By \eqref{eq:deriv.scad} and \eqref{eq:deriv.mcp},
both $\phi_{\mathrm{SCAD}}(w)$ and $\phi_{\mathrm{MCP}}(w)$ 
approach $0$ as $w\nearrow \alpha^2\lambda^2$ with the negative slope.

Aside from the justification as a solution of the penalized least squares problem \eqref{hat.theta},
let us consider
\begin{align}
 \phi_{\mathrm{Q}}(w)=
\begin{cases}
 w & \displaystyle 0\leq w<\frac{2a+1-\sqrt{4a+1}}{2} \\
(a-w)^2 & \displaystyle  \frac{2a+1-\sqrt{4a+1}}{2} \leq w \leq a \\
0 & w>a.
\end{cases}
\end{align}
We see that $ \phi_{\mathrm{Q}}(w)$ satisfies
both \ref{as.ds.2} and \ref{as.ds.3} and
\begin{align}\label{eq:deriv.Q}
\lim_{w\to a} \frac{\rd}{\rd w} \phi_{\mathrm{Q}}(w)=0
\quad\text{and}\quad
\lim_{w\nearrow a} (a-w)\frac{(\rd/\rd w) \phi_{\mathrm{Q}}(w)}{\phi_{\mathrm{Q}}(w)}=-2.
\end{align}

When $\phi(w)$ of debiased shrinkage estimator
approaches $0$ from above as $w\to a$,
it seems that both \{\eqref{eq:deriv.scad} and \eqref{eq:deriv.mcp}\}
and \eqref{eq:deriv.Q} are typical behaviors characterized by $\phi'(w)$.

%\cite{Strawderman-Wells-2012} and \cite{Strawderman-Wells-Schifano-2013}
%extended $\hat{\theta}_{\mathrm{SCAD}}(\lambda;\alpha)$ 
%and $\hat{\theta}_{\mathrm{MCP}}(\lambda;\alpha)$ to the multivariate case, $p\geq 2$.

\section{Main result}
\label{sec:minimax}
In this section, we investigate the minimaxity of the shrinkage debiased estimators
with \ref{as.ds.2} and \ref{as.ds.3}. 
Recall, as in \eqref{Del.nu},
the risk difference between $ \hat{\theta}$ and the minimax estimator $X$
is 
\begin{align}\label{Del.nu.1}
 \EE\bigl[\|\hat{\theta}_\phi-\theta\|^2\bigr]-p
=\EE\left[r_\phi(\|X\|^2)I_{[0,a]}(\|X\|^2)\right],
\end{align}
where $r_\phi(w)$ is given by \eqref{stein.identity.2}.
Under the assumptions on $\phi(w)$, \ref{as.ds.2} and \ref{as.ds.3},
$r_\phi(w)$ given by \eqref{stein.identity.2} is bounded, that is, there exists an $M$
such that
\begin{align}\label{r.bounded}
 |r_\phi(w)|<M \text{ on }[0,a].
\end{align}
For $\phi(w)$ with $\lim_{w\nearrow a}\phi(w)=0$ as wells as
$\phi(w)>0$ for $w<a$, we consider two cases
as a generalization of \{\eqref{eq:deriv.scad} and \eqref{eq:deriv.mcp}\}
and \eqref{eq:deriv.Q}:
\begin{enumerate}[label= \textbf{Case \arabic*},  leftmargin=*]
\item\label{ca.1} $\limsup_{w\nearrow a}\phi'(w)<0$.
\item\label{ca.2} $\lim_{w\nearrow a}\phi'(w)=0$ and there exist $0<\epsilon<1$  and $\gamma>1$ such that
\begin{align}\label{eq.ca.2}
-\gamma < (a-w)\frac{\phi'(w)}{\phi(w)}< -\frac{1}{\gamma},
\end{align}
for all $w\in(\epsilon a,a)$.
\end{enumerate}

Under \ref{ca.1}, there exist $\delta_1>0$ and $0<\delta_2<1$ such that
\begin{align}\label{r.phi.ca.1}
 \phi'(w) < - \delta_1 \ \text{ and } \ \frac{\phi(w)}{w}\left\{\phi(w)-2(p-2)\right\} > - \delta_1,
\end{align}
for all $w\in(\delta_2 a, a)$.
Then, by \eqref{stein.identity.2} and \eqref{r.phi.ca.1}, we have
\begin{equation}\label{r.phi.geq.1}
% \begin{split}
  r_\phi(w) =\frac{\phi(w)}{w}\left\{\phi(w)-2(p-2)\right\}-4\phi'(w) 
\geq  3\delta_1>0,
% \end{split}
\end{equation}
for all $w\in(\delta_2 a, a)$.
Hence, by Theorem \ref{theorem:non_central_chi_1} with \eqref{Del.nu.1}, \eqref{r.bounded} and \eqref{r.phi.geq.1},
we have
\begin{equation}
 \begin{split}
& \liminf_{\nu\to\infty}\frac{\nu^{(p+1)/4}e^{\nu/2}}{e^{\sql\sqa}} 
 \left\{\EE\bigl[\|\hat{\theta}_\phi-\theta\|^2\bigr]-p\right\}\\
%\Delta(\nu;a)
&\quad \geq 3\delta_1c(a,0,p) >0,
\end{split}
\end{equation}
which implies that the debiased shrinkage estimator is not minimax under \ref{ca.1}.

Under \ref{ca.2}, the inequality
\begin{equation*}
-\gamma < (a-w)\frac{\phi'(w)}{\phi(w)} 
\end{equation*}
for $w\in(\epsilon a,a)$ implies
\begin{equation*}
\int_{\epsilon a}^w  \frac{\phi'(t)}{\phi(t)} \rd t>\int_{\epsilon a}^w\frac{-\gamma}{a-t}\rd t
\end{equation*}
which is equivalent to
\begin{align}\label{phi.lower}
 \phi(w)>\phi_*(a-w)^\gamma,\ \text{ where } \
\phi_*=\frac{\phi(\epsilon a)}{(a-\epsilon a)^\gamma},
\end{align}
for all $w\in(\epsilon a, a)$.
%$ \phi_*=\phi(\epsilon a)/(a-\epsilon a)^\gamma$.
Further let
\begin{align}
 \epsilon'=\frac{1}{1+1/\{(p-2)\gamma\}}.
\end{align}
Then, for $w\in(\epsilon'a,a)$,
we have
\begin{align}\label{ca.ca.ca.2}
 -\frac{2(p-2)(a-w)}{w}\geq - \frac{2}{\gamma}.
\end{align}
Hence, for $w\in(\max(\epsilon,\epsilon')a,a)$, we have
\begin{equation}\label{r.phi.geq.2}
 \begin{split}
 r_\phi(w)
&=\frac{\{\phi(w)\}^2}{w}-2(p-2)\frac{\phi(w)}{w}-4\phi'(w) \\
&\geq -2(p-2)\frac{\phi(w)}{w}-4\phi'(w) \\%\label{r.phi.geq.2}\\
%&=\frac{\phi(w)}{a-w}\left\{ \frac{(a-w)\{\phi(w)-2(p-2)\}}{w}-4(a-w)\frac{\phi'(w)}{\phi(w)}\right\}
% \\
&= \frac{\phi(w)}{a-w}\left\{ -\frac{2(p-2)(a-w)}{w}-4(a-w)\frac{\phi'(w)}{\phi(w)}\right\} \\
&\geq \frac{2}{\gamma}\frac{\phi(w)}{a-w},
%\left\{ -\frac{2(p-2)(a-w)}{w}+\frac{4}{\gamma}\right\}\notag \\
%&\geq \frac{2\phi_*}{\gamma}(a-w)^{\gamma-1},\notag 
\end{split}
\end{equation}
where the second inequality follows from \eqref{eq.ca.2} and \eqref{ca.ca.ca.2}.
Further, by \eqref{phi.lower} and \eqref{r.phi.geq.2}, we have
\begin{align}\label{r.phi.geq.3}
 r_\phi(w)\geq \frac{2\phi_*}{\gamma}(a-w)^{\gamma-1}
\end{align}
for $w\in(\max(\epsilon,\epsilon')a,a)$.
Hence, by Theorem \ref{theorem:non_central_chi_1} with \eqref{Del.nu.1}, \eqref{r.bounded} and \eqref{r.phi.geq.3}, we have
\begin{equation*}
 \begin{split}
&\liminf_{\nu\to\infty}\frac{\nu^{(p+1)/4+(\gamma-1)/2}e^{\nu/2}}{e^{\sql\sqa}} 
\left\{\EE\bigl[\|\hat{\theta}_\phi-\theta\|^2\bigr]-p\right\}\\
&\quad\geq\frac{2\phi_*}{\gamma}c(a,\gamma-1,p) >0,
\end{split}
\end{equation*}
which implies that the debiased shrinkage estimator not minimax under \ref{ca.2}.
In summary, we have the following theorem.
\begin{thm}
The debiased shrinkage estimator with \ref{as.ds.2} and \ref{as.ds.3} is not minimax
under either \ref{ca.1} or \ref{ca.2}.
\end{thm}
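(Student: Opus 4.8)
The plan is to derive the theorem by assembling Theorem~\ref{theorem:non_central_chi_1} with the two elementary sign analyses of $r_\phi$ near $w=a$ that were prepared above. Since the minimax risk here is $p$ (attained by $X$ with constant risk), $\hat\theta_\phi$ fails to be minimax as soon as $R(\theta,\hat\theta_\phi)-p>0$ for some $\theta$; I will in fact show this holds for every $\theta$ with $\|\theta\|$ large. By Stein's identity \eqref{Del.nu.1}, $R(\theta,\hat\theta_\phi)-p=\EE\bigl[r_\phi(\|X\|^2)I_{[0,a]}(\|X\|^2)\bigr]$ with $r_\phi$ as in \eqref{stein.identity.2}, and $|r_\phi|<M$ on $[0,a]$ by \eqref{r.bounded}. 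The engine is that in Theorem~\ref{theorem:non_central_chi_1} the exponential factor $e^{\sqrt{\nu}\sqrt{a}}$ is strictly increasing in $a$; hence, writing $\nu=\|\theta\|^2$, the part of the expectation coming from any interval $[0,\delta a]$ with $\delta<1$ is, after multiplication by $\nu^{(p+1)/4+b/2}e^{\nu/2}e^{-\sqrt{\nu}\sqrt{a}}$, of order $e^{-\sqrt{\nu a}(1-\sqrt{\delta})}\to 0$, hence asymptotically negligible relative to the contribution of any fixed neighbourhood of $w=a$. So it remains only to bound $r_\phi$ from below near $a$ and read off the limit.

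Under \ref{ca.1}: since $\phi(w)\to 0$ as $w\nearrow a$, the term $\phi(w)\{\phi(w)-2(p-2)\}/w$ in \eqref{stein.identity.2} vanishes in the limit, so with $\limsup_{w\nearrow a}\phi'(w)<0$ there are $\delta_1>0$, $0<\delta_2<1$ with $r_\phi(w)\ge 3\delta_1$ on $(\delta_2 a,a)$, as in \eqref{r.phi.ca.1}--\eqref{r.phi.geq.1}. Then pointwise $r_\phi(w)I_{[0,a]}(w)\ge 3\delta_1 I_{(\delta_2 a,a)}(w)-M\,I_{[0,\delta_2 a]}(w)$. The function $3\delta_1 I_{(\delta_2 a,a)}$ satisfies \ref{as.g.1} and, after the harmless rescaling noted before Theorem~\ref{theorem:non_central_chi_1}, \ref{as.g.3} with $b=0$, so $\nu^{(p+1)/4}e^{\nu/2}e^{-\sqrt{\nu}\sqrt{a}}\,\EE[3\delta_1 I_{(\delta_2 a,a)}(\|X\|^2)]\to 3\delta_1 c(a,0,p)$; and $M\,\EE[I_{[0,\delta_2 a]}(\|X\|^2)]=M\Pr(\|X\|^2\le\delta_2 a)$, which by Theorem~\ref{theorem:non_central_chi_1} applied with $\delta_2 a$ in place of $a$, $g\equiv 1$, $b=0$, is $\Theta\bigl(\nu^{-(p+1)/4}e^{\sqrt{\nu}\sqrt{\delta_2 a}-\nu/2}\bigr)$ and so contributes $0$ in the scaled limit. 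Hence
\[
\liminf_{\nu\to\infty}\ \nu^{(p+1)/4}e^{\nu/2}e^{-\sqrt{\nu}\sqrt{a}}\bigl\{R(\theta,\hat\theta_\phi)-p\bigr\}\ \ge\ 3\delta_1 c(a,0,p)>0 ,
\]
so $R(\theta,\hat\theta_\phi)>p$ for all large $\nu$ and $\hat\theta_\phi$ is not minimax.

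Under \ref{ca.2}: integrating the left inequality of \eqref{eq.ca.2} over $[\epsilon a,w]$ gives $\phi(w)>\phi_*(a-w)^\gamma$ on $(\epsilon a,a)$ with $\phi_*=\phi(\epsilon a)/(a-\epsilon a)^\gamma>0$ (positivity from $\phi>0$ on $(0,a)$ in \ref{as.ds.3}); and for $w$ past $\epsilon' a$, $\epsilon'=1/(1+1/\{(p-2)\gamma\})$, one has $2(p-2)(a-w)/w\le 2/\gamma$, so dropping $\{\phi(w)\}^2/w\ge 0$ and factoring out $\phi(w)/(a-w)$,
\[
r_\phi(w)\ \ge\ \frac{\phi(w)}{a-w}\left\{-\frac{2(p-2)(a-w)}{w}-4(a-w)\frac{\phi'(w)}{\phi(w)}\right\}\ \ge\ \frac{2}{\gamma}\frac{\phi(w)}{a-w}\ \ge\ \frac{2\phi_*}{\gamma}(a-w)^{\gamma-1}
\]
on $(\delta' a,a)$, $\delta'=\max(\epsilon,\epsilon')$, exactly as in \eqref{phi.lower}--\eqref{r.phi.geq.3}. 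Since $\gamma-1>0$, $\tfrac{2\phi_*}{\gamma}(a-w)^{\gamma-1}$ is bounded on $[0,a]$ (so \ref{as.g.1} holds) and, after rescaling, meets \ref{as.g.3} with $b=\gamma-1$; running the same split-off-and-apply argument (the $[0,\delta' a]$ remainder again negligible) yields
\[
\liminf_{\nu\to\infty}\ \nu^{(p+1)/4+(\gamma-1)/2}e^{\nu/2}e^{-\sqrt{\nu}\sqrt{a}}\bigl\{R(\theta,\hat\theta_\phi)-p\bigr\}\ \ge\ \frac{2\phi_*}{\gamma}c(a,\gamma-1,p)>0 ,
\]
so again $\hat\theta_\phi$ is not minimax, which completes the proof.

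The one point that carries the argument, and which I expect to be the main obstacle, is the claim that the behaviour of $r_\phi$ on intervals staying away from the threshold $a$ is asymptotically immaterial. This is precisely the content of Theorem~\ref{theorem:non_central_chi_1}: its proof—decompose $\|X\|^2=U^2+V$ with $U\sim N(\sqrt{\nu},1)$, $V\sim\chi^2_{p-1}$, change variables by $z=(\sqrt{\nu}-\sqrt{a})(\sqrt{a}-u)$ so the surviving mass concentrates at $u=\sqrt{a}$, and apply dominated convergence using the boundedness of $H$ from Lemma~\ref{lem:G*}—both isolates the rate $e^{\sqrt{\nu}\sqrt{a}}$ and shows it is governed by the right endpoint. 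Granting that, the remaining care is only bookkeeping: checking that the lower-bounding functions satisfy \ref{as.g.1} and \ref{as.g.3}, that $\phi_*>0$, and that $p\ge 2$ (required to invoke Theorem~\ref{theorem:non_central_chi_1}) is in force.
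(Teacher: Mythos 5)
Your proof is correct and follows essentially the same route as the paper: the same lower bounds $r_\phi(w)\geq 3\delta_1$ (Case 1) and $r_\phi(w)\geq \tfrac{2\phi_*}{\gamma}(a-w)^{\gamma-1}$ (Case 2) near $w=a$, combined with Theorem \ref{theorem:non_central_chi_1}. The only difference is that you spell out the step the paper leaves implicit — splitting off $[0,\delta a]$ and showing its contribution is $O\bigl(e^{-\sqrt{\nu}(\sqrt{a}-\sqrt{\delta a})}\bigr)$ after rescaling, via a second application of Theorem \ref{theorem:non_central_chi_1} with $g\equiv 1$ — which is a welcome clarification rather than a departure.
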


\begin{remark}
Yet another application of Theorem \ref{theorem:non_central_chi_1} is also related to Stein estimation,
the gain of the positive-part estimator $\hat{\theta}_{\JS}^+$ given by \eqref{JSPP}
over
the naive James-Stein estimator $\hat{\theta}_{\JS}$ given by \eqref{JS}.
For these estimators, the corresponding $\phi(w)$ are given by
\begin{equation}\label{JS.phi}
 \phi_{\JS}^+(w)=\min(w,p-2),\quad\phi_{\JS}(w)=p-2.
\end{equation}
By the general expression of the risk, 
\eqref{stein.identity.1} and \eqref{stein.identity.2} with \eqref{JS.phi},
we have
\begin{equation}\label{js.jspp.risk.diff}
 R(\theta,\hat{\theta}_{\JS})-R(\theta,\hat{\theta}_{\JS}^+)
=\EE\Bigl[\bigl\{-\frac{(p-2)^2}{\|X\|^2} + 2p - \|X\|^2\bigr\}I_{[0,p-2]}(\|X\|^2)\Bigr].
\end{equation}

Let 
\begin{align*}
f_k(v)&=\frac{v^{k/2-1}e^{-v/2}}{\Gamma(k/2)2^{k/2}}, \quad
f_k(v;\nu)=\sum_{i=0}^\infty\frac{(\nu/2)^i\exp(-\nu/2)}{i!}f_{k+2i}(v),\\
F_k(v)&=\int_0^v f_k(w)\rd w, \quad F_k(v;\nu)=\int_0^v f_k(w;\nu)\rd w.
\end{align*}
\cite{Hansen-2022b}, in Theorem 15.7, 
expressed the risk difference \eqref{js.jspp.risk.diff} through 
$F_k(v)$ and $F_k(v;\nu)$, 
the distribution functions of the central chi-square with and non-central chi-square, as
\begin{align*}
 R(\theta,\hat{\theta}_{\JS})-R(\theta,\hat{\theta}_{\JS}^+)
&=2pF_p(p-2;\nu)-pF_{p+2}(p-2;\nu)-\nu F_{p+4}(p-2;\nu)\\
&\quad -(p-2)^2\sum_{i=0}^\infty\frac{(\nu/2)^i\exp(-\nu/2)}{i!}\frac{F_{p+2i-2}(p-2)}{p+2i-2}.
\end{align*}
\cite{Robert-1988} expressed the risk difference \eqref{js.jspp.risk.diff} through 
the Dawson integral given by
\begin{align*}
 D(\lambda)=e^{-\lambda^2}\int_0^\lambda e^{t^2}\rd t.
\end{align*}
The results by \cite{Hansen-2022b} and \cite{Robert-1988}, 
do not seem to directly 
provide the exact asymptotic order of the major term of \eqref{js.jspp.risk.diff}
with the exact coefficient.

Using Theorem \ref{theorem:non_central_chi_1}, we can get it as follows.
Since
\begin{align*}
\left. \Bigl\{-\frac{(p-2)^2}{w} + 2p - w\Bigr\}\right|_{w=p-2}=4,
\end{align*}
and
\begin{align*}
 w^{(p-1)/2}\left|-\frac{(p-2)^2}{w} + 2p - w\right|
=w^{(p-3)/2}\left|-(p-2)^2 + 2pw - w^2\right|
\end{align*}
is bounded for $w\in(0,p-2)$ and for $p\geq 3$,
Theorem \ref{theorem:non_central_chi_1} gives
\begin{align*}
&\lim_{\nu\to\infty}
\frac{\nu^{(p+1)/4}e^{\nu/2}}{e^{\sql\sqrt{p-2}}} 
 \bigl\{R(\theta,\hat{\theta}_{\JS})-R(\theta,\hat{\theta}_{\JS}^+)\bigr\}\\
&=4c(p-2,0,p)=4\frac{(p-2)^{(p-1)/4}}{\sqrt{2\pi}\exp(p/2-1)}.
\end{align*}
%which implies that the asymptotic gain of the positive-part estimator $\hat{\theta}_{\JS}^+$ 
%is positive, but quite small.
\end{remark}

\end{document}